\documentclass[a4papersize]{article}
\setlength{\oddsidemargin}{0.2in}
\setlength{\textwidth}{6.5in}
\setlength{\topmargin}{-0.10in}
\setlength{\textheight}{8.5in}
\setlength{\parskip}{6truept}
\usepackage{amsmath,amsthm, amsxtra,amssymb,latexsym, amscd}
\usepackage[mathscr]{eucal}
\newtheorem{theorem}{Theorem}[section]
\newtheorem{corollary}[theorem]{Corollary}
\newtheorem{lemma}[theorem]{Lemma}

\newtheorem{example}[theorem]{Example}

\DeclareMathOperator{\depth}{depth}

\DeclareMathOperator{\Ann}{Ann}
\DeclareMathOperator{\docao}{ht}
\DeclareMathOperator{\Ass}{Ass}

\DeclareMathOperator{\Supp}{Supp}

\DeclareMathOperator{\Var}{Var}

\DeclareMathOperator{\nCM}{nCM}
\DeclareMathOperator{\Psupp}{Psupp}

\DeclareMathOperator{\Rad}{Rad}
\DeclareMathOperator{\Att}{Att}

\DeclareMathOperator{\Spec}{Spec}
\DeclareMathOperator{\p}{\frak p}
\DeclareMathOperator{\q}{\frak q}
\DeclareMathOperator{\m}{\frak m}
\DeclareMathOperator{\R}{\widehat R}

\begin{document}
\large
\centerline{\Large {\bf ON DEPTH OF MODULES IN AN IDEAL}}

\medskip

\vskip 0.7cm
\centerline { TRAN NGUYEN AN}
\centerline {Thai Nguyen University of Education, Thai Nguyen, Vietnam}
\centerline {e-mail: antn@tnue.edu.vn }
\medskip

\vskip 0.7cm

\noindent{\bf Abstract} {\footnote{ {\it{Key words and phrases: }} Pseudo support; Depth; Height of an ideal; Cohen-Macaulay local ring \hfill\break
  {\it{2000 Subject  Classification: }} 13D45, 13E15, 13E05. \hfill\break {\, This research is funded by Thai Nguyen University and Thai Nguyen University of Education under grant number DH2023-TN04-07}}. {Let $R$ be a commutative Noetherian ring, $I$ an ideal of $R$ and  $M$  a finitely generated $R$-module with $\dim_R(M)=d$. Denote by $\depth_R(I,M)$ the depth of $M$ in $I$. In \cite{HT}, C. Huneke and V. Trivedi proved that if $R$ is a quotient of a regular ring then there exists a finite subset $\Lambda_M $ of $\Spec(R)$ such that
  $$\depth_R(I,M)=\underset{\p\in \Lambda_M}{\min} \big\{ \depth_{R_{\p}}(M_{\p})+ \docao\big((I+\p)/\p\big) \big\}.$$
   Denote by $\Psupp^i_R(M)=\{\frak p\in\Spec(R)\mid H^{i-\dim(R/\frak p)}_{\frak p R_{\frak p}}(M_{\frak p})\neq 0\}$ the $i$-th pseudo support of $M$ defined by M. Brodmann and R. Y. Sharp \cite{BS1}. In this paper, we prove that if $\Psupp^i_R(M)$ is closed for all $i\leq d$ then  the above formula of $\depth_R(I,M)$ holds true, where $\Lambda_M =\underset{0\leq i\leq d}{\bigcup} \min \Psupp^i_R(M)$.  In particular, if $R$ is a quotient of a Cohen-Macaulay local ring then $\Lambda_M =\underset{0\leq i\leq d}{\bigcup}\min\Var\big(\Ann_R(H_{\m}^i(M))\big)$. We also give some examples to clarify the results.

\section{Introduction}

Throughout this paper, let $(R, \mathfrak m)$ be a Noetherian local ring and $M$ a finitely generated $R$-module of dimension $d$. For a subset $T$ of $\Spec (R),$ we denote by $\min T$ the set of all minimal elements of $T$ under the inclusion. For each ideal $I$ of $R$, denote by $\Var (I)$ the set of all prime ideals containing $I$. Following M. Brodmann and R. Y. Sharp \cite{BS1}, for each integer  $i \geq 0$, the {\it $i$-th pseudo support} of $M$, denoted by $\Psupp^i_R(M)$, is defined as follows $$\Psupp^i_R(M)=\{\p\in\Spec (R)\mid  H^{i-\dim (R/\p)}_{\p R_{\p}}(M_{\p})\neq 0\}.$$ 
Note that $\Psupp^i_R(M) \subseteq \Var(\Ann_R (H_{\m}^i(M)))$, cf. \cite[Lemma 2.3]{CNN}. If $R$ is a quotient of a Cohen-Macaulay local ring then $\Psupp^i_R(M) = \Var(\Ann_R (H_{\m}^i(M)))$, see \cite[Proposition 2.5]{BS1}. In this case, $\Psupp^i_R(M)$ is closed (under Zariski topology) for all $i$. The notion of pseudo support plays an important role in the study of the structure of Noetherian rings and modules, the dimension and multiplicity of local cohomology modules, the shifted principles for primes ideals and the Cohen-Macaulay locus, see \cite{BS1}, \cite{NA}, \cite{NA1}, \cite{CNN}. 

Let $I$ be a proper ideal of $R$. Denote by $\depth_R(I,M)$ the depth of $M$ in $I$. By C. Huneke and V. Trivedi \cite{HT},  
$$\depth_R(I,M)\leq \underset{\p\in\Supp_R(M)}{\min} \big\{ \depth_{R_{\p}}(M_{\p})+ \docao\big((I+\p)/\p\big) \big\}$$
and if $R$ is a quotient of a regular ring then there exists a finite subset $\Lambda_M $ of $\Supp_R(M)$ such that
$$\depth_R(I,M)=\underset{\p\in \Lambda_M}{\min} \big\{ \depth_{R_{\p}}(M_{\p})+ \docao\big((I+\p)/\p\big) \big\}.$$

  In case where $I=\frak m$, we have $$\depth_R(M) \leq \depth_{R_{\p}}(M_{\p}) +\dim(R/\p),$$ see \cite[Exercise 17.5]{Mat}. Moreover, for $t=\depth_R(M),$ it follows by \cite[Theorem 3.1]{An} that $\depth_R(M)=\depth_{R_{\p}}(M_{\p}) +\dim (R/\p)$ if and only if $\p\in\Psupp_R^t(M).$ Note that $\Psupp_R^t(M)\neq \emptyset$. Therefore,
$$\depth_R(M)=\underset{\p\in\Gamma}{\min} \{\depth_{R_{\p}}(M_{\p}) +\dim (R/\p)\}$$ for any non-empty finite subset $\Gamma$ of $\Psupp_R^t(M).$ 

In this paper, we use pseudo supports $\Psupp^i_R(M)$ to study the formula of $\depth_R(I,M)$ for any proper ideal $I$ of $R$. In case where $I\neq \frak m$, the $\depth_R(I,M)$ may be related to pseudo supports $\Psupp^i_R(M)$ for $i\leq d.$   We show that the above formula of  $\depth_R(I,M)$ holds true whenever all pseudo supports $\Psupp^i_R(M)$ are closed and $\Lambda_M =\underset{0\leq i\leq d}{\bigcup} \min \Psupp^i_R(M)$. In particular, if $R$ is a quotient of a Cohen-Macaulay local ring then we can choose $$\Lambda_M =\bigcup\limits_{i=0}^{d}\min\Var\big(\Ann_R(H_{\m}^i(M))\big).$$ 
The following theorem is the main result of this paper.

\begin{theorem}\label{T:1} For any proper ideal $I$ of $R$, the formula
	$$\depth_R(I,M)=\underset{\p\in \Lambda_M}{\min}\left\{ \depth_{R_{\p}}(M_{\p})+ \docao\big((I+\p)/\p\big) \right\}$$ 
	holds true if one of the following conditions is satisfied:
	
	{\rm (a)} $\Psupp^i_R(M)$ is closed for all $i\leq d$ and $\Lambda_M=\bigcup_{i=0}^d \min\Psupp^i_R(M)$; 

 {\rm (b)} $R$ is a quotient of a Cohen-Macaulay local ring and $\Lambda_M=\bigcup_{i=0}^d \min \Var(\Ann_R(H^i_{\m}(M)))$. 
\end{theorem}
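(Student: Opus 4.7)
I would first observe that part (b) follows formally from part (a): by \cite[Proposition 2.5]{BS1}, whenever $R$ is a quotient of a Cohen--Macaulay local ring, $\Psupp^i_R(M)=\Var(\Ann_R H^i_\m(M))$ for every $i$, so each pseudo support is closed and the $\Lambda_M$ of part (b) coincides with that of part (a). Thus it suffices to establish (a). The upper bound is immediate: since $\Psupp^i_R(M)\subseteq\Supp_R M$ for every $i$, we have $\Lambda_M\subseteq\Supp_R M$, and the Huneke--Trivedi inequality (which holds in any Noetherian local ring) yields $\depth_R(I,M)\le\depth_{R_\p}(M_\p)+\docao((I+\p)/\p)$ for every $\p\in\Lambda_M$.

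For the lower bound, one must exhibit a witness $\p\in\Lambda_M$ realising equality, and I would do this by induction on $t:=\depth_R(I,M)$. When $t=0$, $\Gamma_I(M)\neq 0$, so any $\p\in\Ass_R M\cap\Var(I)$ satisfies $\depth_{R_\p}(M_\p)=0$ and $\docao((I+\p)/\p)=0$; the nonvanishing $H^0_{\p R_\p}(M_\p)\neq 0$ places $\p$ in $\Psupp^{\dim R/\p}_R(M)$, and any $\q\subsetneq\p$ has $\dim R/\q>\dim R/\p$, forcing $H^{\dim R/\p-\dim R/\q}_{\q R_\q}(M_\q)=0$, so $\p\in\min\Psupp^{\dim R/\p}_R(M)\subseteq\Lambda_M$. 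When $t\ge 1$, $\Gamma_I(M)=0$ gives an $M$-regular element $x\in I$, and I set $M':=M/xM$. The long exact sequences of local cohomology yield $\depth_R(I,M')=t-1$ and the containment $\Psupp^i_R(M')\subseteq(\Psupp^i_R(M)\cup\Psupp^{i+1}_R(M))\cap\Var(x)$ for every $i$. After verifying that each $\Psupp^i_R(M')$ is closed for $i\le d-1=\dim_R M'$, I apply the inductive hypothesis to obtain $\p'\in\Lambda_{M'}$ with $\depth_{R_{\p'}}(M'_{\p'})+\docao((I+\p')/\p')=t-1$; since $x$ is $M_{\p'}$-regular, $\depth_{R_{\p'}}(M_{\p'})=\depth_{R_{\p'}}(M'_{\p'})+1$, and after verifying $\p'\in\Lambda_M$ one concludes the equality at $t$.

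The main technical obstacle is twofold: transferring closedness of the pseudo supports from $M$ to $M'=M/xM$ (which is not formal from the above containment alone and requires analysing the short exact sequence $0\to H^i_\m(M)/xH^i_\m(M)\to H^i_\m(M')\to(0:_{H^{i+1}_\m(M)}x)\to 0$ extracted from the long exact sequence of $H^*_\m$), and locating the witness $\p'\in\Lambda_{M'}$ as a minimal prime of the appropriate $\Psupp^j_R(M)$. Both reductions lean essentially on the closedness hypothesis, which forces the relevant sets of primes to be cut out by annihilator ideals with finitely many minimal primes; this is the precise point at which our hypothesis substitutes for the dualizing-complex machinery employed by Huneke and Trivedi in the quotient-of-regular case.
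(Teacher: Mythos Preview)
Your reduction of (b) to (a) and the upper bound are correct and match the paper. For the lower bound you propose a different route---induction on $t=\depth_R(I,M)$ via $M'=M/xM$---whereas the paper sets $n=\min_{\p\in\Lambda_M}\{\depth_{R_\p}(M_\p)+\docao((I+\p)/\p)\}$ and, following Huneke--Trivedi, constructs an $M$-sequence $y_1,\dots,y_n\in I$ directly by prime avoidance over the finite set $\Lambda_M$, never passing to a quotient of $M$.

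Both ``technical obstacles'' you flag are genuine gaps, not routine verifications. For the first, the long exact sequence only yields
\[
\Psupp^{i+1}_R(M)\cap\Var(x)\ \subseteq\ \Psupp^i_R(M')\ \subseteq\ \bigl(\Psupp^i_R(M)\cup\Psupp^{i+1}_R(M)\bigr)\cap\Var(x),
\]
and a specialization-stable set sandwiched between two closed sets need not itself be closed (the missing primes are those $\p\in\Psupp^i_R(M)\cap\Var(x)\setminus\Psupp^{i+1}_R(M)$ on which $x$ happens to act surjectively on $H^{i-\dim R/\p}_{\p R_\p}(M_\p)$, a condition with no evident closedness). The short exact sequence $0\to H^i_\m(M)/x\to H^i_\m(M')\to(0:_{H^{i+1}_\m(M)}x)\to 0$ you invoke is $\m$-adic and therefore controls $\Var(\Ann_RH^i_\m(M'))$, not $\Psupp^i_R(M')$; these coincide only under the stronger hypothesis of (b), where one could simply cite \cite{BS1} for $M'$ directly. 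For the second obstacle, any $\p'\in\min\Psupp^i_R(M')$ contains $x$, while the minimal primes of $\Psupp^j_R(M)$ typically do not (e.g.\ $\min\Psupp^d_R(M)\subseteq\Ass_RM$), so $\p'\in\Lambda_M$ fails outright; descending from $\p'$ to some $\p\in\Lambda_M$ below it and then comparing $\docao((I+\p)/\p)$ with $\docao((I+\p')/\p')$ is precisely the height bookkeeping that the paper's direct construction is designed to carry. The paper sidesteps both problems by staying with $M$ throughout: finiteness of $\Lambda_M$ feeds prime avoidance, and regularity of $y_1,\dots,y_n$ is checked by locating any associated prime $\q$ of $M/(y_1,\dots,y_{n-1})M$ inside $\bigcup_{j\le s}\Psupp^j_R(M)$ (with $s=\depth_{R_\q}M_\q+\dim R/\q$), passing to a minimal $\q'\in\Lambda_M$, and deriving a height contradiction.
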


 The proof of Theorem \ref{T:1} will be presented in the next section. Some examples are given to clarify the results of the paper, see Examples \ref{E:1}, \ref{E:2}, \ref{E:3}. In these examples, we compute concretely pseudo supports and annihilators of local cohomology modules, then we determine the set $\Lambda_M$ defined in Theorem \ref{T:1}.

\section{Proof of Theorem \ref{T:1} }

 We first need some properties of pseudo supports, see \cite[2.2, 2.5]{BS1}, \cite[Theorem 3.1]{NA}. Recall that  a subset $T$ of $\Spec(R)$ is said to be {closed under specialization} if for any $\frak p\subseteq \frak q$ with $\frak p, \frak q\in\Spec(R)$, if $\frak p\in T$ then $\frak q\in T.$ 

\begin{lemma} \label{L:1a} Let $i\geq 0$ be an integer. Then

{\rm (a)} $\Psupp^i_R(M)\subseteq \Var\big(\Ann_R (H_{\m}^i(M))\big).$

{\rm (b)} $\Psupp^i_R(M)$ is \textit{closed under specialization} if $R$ is catenary. In this case, $\Psupp^i_R(M)$ is closed if and only if $\min \Psupp^i_R(M)$ is a finite set.

{\rm (c)} $\Psupp^i_R(M) = \Var(\Ann_R (H_{\m}^i(M)))$  if $R$ is a quotient of a Cohen-Macaulay local ring. In this case, $\Psupp^i_R(M)$ is closed.
\end{lemma}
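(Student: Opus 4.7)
The three parts split naturally: (a) is the core homological input, (b) has two independent halves (closure under specialization, which uses (a), and the characterization of closedness, which is purely point-set), and (c) is a sharpening of (a) under a global assumption on $R$.

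For (a), the plan is to show directly that the extension of $\Ann_R(H_{\m}^i(M))$ to $R_\p$ lies inside $\Ann_{R_\p}\bigl(H^{i-\dim(R/\p)}_{\p R_\p}(M_\p)\bigr)$; this is essentially \cite[Proposition 2.5]{BS1}, as already reformulated in \cite[Lemma 2.3]{CNN}. Once that annihilator inclusion is in hand, $\p\in\Psupp^i_R(M)$ says the target module is nonzero, so its annihilator is a proper ideal of $R_\p$, and contracting back to $R$ gives $\Ann_R(H_{\m}^i(M))\subseteq\p$. The delicate point is comparing local cohomology at $\m$ with that at $\p R_\p$ in \emph{different} cohomological degrees: the two modules are not related by naive localization, and one needs the shifted local duality / filter-regular-sequence machinery of Brodmann--Sharp. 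I would regard this as the main technical step of the whole lemma.

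For the specialization-closure in (b), assume $R$ catenary, $\p\in\Psupp^i_R(M)$, and $\p\subseteq\q$. The plan is to transfer the question into $(R_\q,\q R_\q)$: catenarity of $R$ gives $\dim(R/\p)=\dim(R/\q)+\docao(\q/\p)$, so setting $j=i-\dim(R/\q)$ the shifted degree $i-\dim(R/\p)$ becomes $j-\dim(R_\q/\p R_\q)$, and the hypothesis rewrites as $\p R_\q\in\Psupp^j_{R_\q}(M_\q)$. Applying (a) inside $R_\q$ yields $\Ann_{R_\q}\bigl(H^j_{\q R_\q}(M_\q)\bigr)\subseteq\p R_\q\subsetneq R_\q$, hence $H^j_{\q R_\q}(M_\q)\neq 0$, which is exactly $\q\in\Psupp^i_R(M)$. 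For the second half of (b), I would argue topologically: any closed subset of $\Spec R$ equals $\bigcup_{j=1}^{k}\Var(\p_j)$ with finitely many $\p_j$; conversely, if $\min\Psupp^i_R(M)=\{\p_1,\dots,\p_k\}$ is finite, DCC on primes in the Noetherian ring $R$ shows every $\q\in\Psupp^i_R(M)$ contains some $\p_j$ (pick a minimal element of $\{\p\in\Psupp^i_R(M):\p\subseteq\q\}$ and verify it is minimal in the full set), and closure under specialization then delivers $\Psupp^i_R(M)=\bigcup_{j=1}^{k}\Var(\p_j)$, which is closed.

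For (c), the equality $\Psupp^i_R(M)=\Var\bigl(\Ann_R(H_{\m}^i(M))\bigr)$ under the Cohen--Macaulay-quotient hypothesis is the content of \cite[Proposition 2.5]{BS1}: the nontrivial direction $\supseteq$ is obtained by a Cohen--Macaulay cover together with local duality, producing nonzero elements in the shifted local cohomology at $\p R_\p$ for every $\p$ containing $\Ann_R(H_{\m}^i(M))$. Closedness is then immediate from the closedness of any $\Var(\cdot)$.
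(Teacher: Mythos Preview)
The paper does not actually prove this lemma: it is stated as a recollection of known facts with the pointer ``see \cite[2.2, 2.5]{BS1}, \cite[Theorem 3.1]{NA}'' and no argument is given. Your plan is correct and supplies genuine proofs where the paper supplies only citations, so strictly speaking there is no ``paper's approach'' to compare against.

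One remark on your route to specialization-closure in (b): reducing to the local ring $R_\q$ and then invoking (a) for $R_\q$ (so that $\p R_\q\in\Psupp^j_{R_\q}(M_\q)$ forces $H^j_{\q R_\q}(M_\q)\neq 0$ via the annihilator inclusion) is a clean argument and perhaps slicker than the attached-prime bookkeeping in the original sources; it makes transparent exactly where catenarity enters (the identity $\dim(R/\p)=\dim(R/\q)+\docao(\q/\p)$, which converts the shift correctly). The purely topological half of (b) and the citation to \cite[Proposition~2.5]{BS1} for (c) match what the references contain.
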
 

Note that if $R$ is a Noetherian local domain of dimension $2$ then $\Psupp_R^0(R)=\emptyset,$ $\Psupp_R^1(R)\subseteq \{\frak m\}$ and $\Psupp_R^2(R)=\Spec(R)$, therefore $\Psupp_R^i(R)$ is closed for all $i.$ However, if $\dim (R)\geq 3$ then $\Psupp_R^i(R)$ is not necessarily closed, see \cite[Examples 3.1, 3.2]{BS1}.

Next, we give some relations among psupports and the depth of finitely generated modules, see \cite[Theorem 3.1(iii)]{CNN}, \cite[Theorem 3.1]{An}.

\begin{lemma}\label{L:1} For any integer $i\geq 0$ we have 
		
{\rm (a)} $\bigcup_{j=0}^i \Psupp_R^j(M)=\{\p \in \Supp_R(M)\mid \depth_{R_{\frak p}} (M_{\p}) +\dim (R/\p)\leq i\}.$

{\rm (b)} $ \Psupp_R^i(M) \setminus \bigcup_{j=0}^{i-1} \Psupp_R^j(M) =\{\p \in \Supp_R(M) \mid \depth_{R_{\p}}(M_{\p}) +\dim (R/\p) =i\}.$ 

{\rm (c)} Set $t:=\depth_R(M).$ Then $t= \depth_{R_{\p}}(M_{\p}) +\dim (R/\p)$ if and only if $\p\in \Psupp_R^t(M).$
\end{lemma}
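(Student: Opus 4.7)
The three parts admit a uniform treatment via a single localization argument, combined with Grothendieck's theorem that the least integer $s$ with $H^s_I(N)\neq 0$ equals $\depth_I(N)$. The plan is to prove (a) first, then deduce (b) by subtraction, and finally derive (c) from (a) combined with the bound $\depth_R(M)\leq \depth_{R_\p}(M_\p)+\dim(R/\p)$ of \cite[Ex.~17.5]{Mat}.

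For (a), set $k=\dim(R/\p)$ and $\delta=\depth_{R_\p}(M_\p)$. The idea is to apply Grothendieck's characterization of depth inside the local ring $(R_\p,\p R_\p)$: the smallest integer $\ell\geq 0$ with $H^{\ell}_{\p R_\p}(M_\p)\neq 0$ equals $\delta$, with the convention $\delta=+\infty$ when $M_\p=0$. Unwinding the definition of pseudo support,
$$\p\in\bigcup_{j=0}^{i}\Psupp^j_R(M)\ \iff\ \text{some } \ell\in[0,i-k] \text{ satisfies } H^{\ell}_{\p R_\p}(M_\p)\neq 0\ \iff\ \delta\leq i-k.$$
The last condition rewrites as $\depth_{R_\p}(M_\p)+\dim(R/\p)\leq i$. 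The membership $\p\in\Supp_R(M)$ on the right-hand side is automatic, since $M_\p=0$ forces $\delta=+\infty$; conversely, if $\p\notin\Supp_R(M)$ then all local cohomology of $M_\p$ vanishes and $\p$ is excluded from every $\Psupp^j_R(M)$.

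Part (b) is then immediate. Writing $S_i:=\{\p\in\Supp_R(M):\depth_{R_\p}(M_\p)+\dim(R/\p)=i\}$, applying (a) at levels $i$ and $i-1$ identifies $S_i$ with $\bigcup_{j=0}^{i}\Psupp^j_R(M)\setminus\bigcup_{j=0}^{i-1}\Psupp^j_R(M)$, which telescopes to $\Psupp^i_R(M)\setminus\bigcup_{j=0}^{i-1}\Psupp^j_R(M)$.

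For (c), write $t=\depth_R(M)$. If $\p\in\Psupp^t_R(M)$, then (a) with $i=t$ gives $\depth_{R_\p}(M_\p)+\dim(R/\p)\leq t$, and Matsumura's inequality supplies the reverse, forcing equality. Conversely, if $\depth_{R_\p}(M_\p)+\dim(R/\p)=t$, part (a) puts $\p$ into some $\Psupp^j_R(M)$ with $j\leq t$; should $j<t$, (a) applied with $i=t-1$ would force $\depth_{R_\p}(M_\p)+\dim(R/\p)\leq t-1$, a contradiction, so $j=t$. The only nontrivial input in the whole argument is Grothendieck's vanishing/non-vanishing theorem applied at each prime $\p$; the rest is routine set-theoretic bookkeeping together with an appeal to Matsumura's classical inequality, so no substantive obstacle is anticipated.
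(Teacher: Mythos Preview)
Your argument is correct. The paper does not supply its own proof of this lemma; it simply records the statement with citations to \cite[Theorem~3.1(iii)]{CNN} and \cite[Theorem~3.1]{An}. What you have written is the natural self-contained proof: unwinding the definition of $\Psupp^j_R(M)$ at a prime $\p$ amounts to asking whether $H^{j-\dim(R/\p)}_{\p R_\p}(M_\p)\neq 0$, and Grothendieck's characterization of depth as the least nonvanishing local-cohomology index immediately converts membership in the union $\bigcup_{j\leq i}\Psupp^j_R(M)$ into the inequality $\depth_{R_\p}(M_\p)+\dim(R/\p)\leq i$. Parts (b) and (c) then follow by the bookkeeping you indicate; for (c) you could alternatively invoke (b) at level $t$ directly, which gives the reverse implication in one line. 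The only external inputs are Grothendieck's theorem and the inequality from \cite[Exercise~17.5]{Mat}, both standard, so there is no gap.
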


From now on, set $\Lambda_M=\bigcup\limits_{i=0}^d\min \Psupp_R^i(M)$. It is clear that if $\Psupp_R^i(M)$ is closed for all $i$ then $\Lambda_M$ is a finite set.  By Lemma \ref{L:1a}(c), if $R$ is a homomorphic image of a Cohen-Macaulay local ring then $\Lambda_M=\bigcup\limits_{i=0}^{d}\min\Var(\Ann_RH_{\m}^i(M))$ which is a finite set.

\begin{lemma}\label{nxtap} $\Ass_R(M)\subseteq \Lambda_M$. 
\end{lemma}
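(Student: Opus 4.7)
The plan is to take an arbitrary $\p \in \Ass_R(M)$ and identify a specific $i \le d$ such that $\p \in \min \Psupp_R^i(M)$. Since $\p$ is associated to $M$, we have $\p \in \Supp_R(M)$ and $\depth_{R_\p}(M_\p) = 0$. Setting $i := \dim(R/\p)$, we get $i \le d$ and
$$\depth_{R_\p}(M_\p) + \dim(R/\p) = 0 + i = i.$$
Lemma \ref{L:1}(b) then gives immediately that $\p \in \Psupp_R^i(M)$ (in fact $\p$ is in the layer where this sum equals $i$ exactly, not less).

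The remaining task is to verify that $\p$ is minimal in $\Psupp_R^i(M)$. I would argue by contradiction: assume $\q \in \Psupp_R^i(M)$ with $\q \subseteq \p$. By Lemma \ref{L:1}(a) applied to $\q$,
$$\depth_{R_\q}(M_\q) + \dim(R/\q) \le i.$$
On the other hand, if $\q \subsetneq \p$, then any saturated chain ending at $\p$ can be extended downward by $\q \subsetneq \p$, giving $\dim(R/\q) \ge \dim(R/\p) + 1 = i + 1$, which already contradicts the previous inequality. Hence $\q = \p$, showing $\p \in \min \Psupp_R^i(M) \subseteq \Lambda_M$.

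The only point that requires any care is the chain inequality $\dim(R/\q) \ge \dim(R/\p) + 1$ for $\q \subsetneq \p$, which holds in any Noetherian ring without needing the catenary hypothesis, so no extra assumption on $R$ is used. I do not foresee a genuine obstacle here: the statement is essentially a bookkeeping consequence of the characterization in Lemma \ref{L:1}(b), combined with the observation that associated primes sit at depth zero locally.
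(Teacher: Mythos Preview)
Your proof is correct and follows essentially the same route as the paper: pick $\p\in\Ass_R(M)$, set $i=\dim(R/\p)$, show $\p\in\Psupp^i_R(M)$, and rule out any strictly smaller $\q$ by the inequality $\dim(R/\q)>i$. The only cosmetic difference is that the paper verifies $\p\in\Psupp^i_R(M)$ and $\q\notin\Psupp^i_R(M)$ directly from the definition (via $H^0_{\p R_\p}(M_\p)\neq 0$ and the vanishing of local cohomology in negative degrees), whereas you route both steps through the depth characterization of Lemma~\ref{L:1}; the underlying argument is the same.
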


\begin{proof}	Let $\p\in \Ass_R(M)$. Set $\dim (R/\p)=k.$ Since $\p R_{\p}\in \Ass_{R_{\p}}(M_{\p}),$ we have  $$0\neq H_{\p R_{\p}}^0(M_{\p})=H_{\p R_{\p}}^{k-\dim (R/\p)}(M_{\p}).$$ Hence $\p\in \Psupp_R^k(M)$. Let $\frak q\in\Spec(R)$ such that $\p \supseteq\q$ and $\frak p\neq \frak q$. Then $\dim (R/\q)>k.$  Hence $H_{\q R_{\q}}^{k-\dim (R/\q)}(M_{\q})=0$ and hence $\q \notin \Psupp_R^k(M).$ So, $\p\in \min(\Psupp_R^k(M))\subseteq \Lambda_M.$ \end{proof}

Now we are ready to prove Theorem \ref{T:1}. 

\begin{proof}[Proof of Theorem \ref{T:1}] (a) The method of proving this theorem follows partly the method used in \cite{HT}. Since $\Psupp^i_R(M)$ is closed for all $i$, the set $\Lambda_M=\bigcup_{i=0}^d \min\Psupp^i_R(M)$ is a finite set. 	
	 Let $\p \in \Lambda_M$.  Choose $\q\in\Var(\p+I)$ such that $\displaystyle \docao\left((I+\p)/{\p}\right)=\docao\left(\q/{\p}\right)$. Then we get by \cite[Lemma 1.6]{HT} that
 $$\depth_R(I, M) \leq \depth_R(\q, M) \leq \depth_{R_{\p}}(M_{\p})+\docao\left(\q/{\p}\right) =\depth_{R_{\p}}(M_{\p})+ \docao\left((I+\p)/{\p}\right)$$ for all $\frak p\in \Lambda_M.$ Therefore  $$\depth_R(I,M)\leq \underset{\p\in \Lambda_M}{\min}\left\{ \depth_{R_{\p}}(M_{\p})+ \docao\big((I+\p)/\p\big) \right\}.$$

We prove the converse inequality. Set $\displaystyle n=\underset{\p\in \Lambda_M}{\min}\left\{\depth_{R_{\p}}(M_{\p})+ \docao\big((I+\p)/\p\big) \right\}.$ 
	 We will prove that  $\depth_R(I,M)\geq n$. It is obvious if $n=0$. Assume $n\geq 1.$ By the definition of $n$, we have $$\displaystyle \docao\big((I+\p)/\p\big) \geq n-\depth_{R_{\p}}(M_{\p})$$
	 for all $\p\in \Lambda_M$. 
We claim that there are $n$ elements $y_1, \ldots, y_n\in I$ such that $$\displaystyle\docao\left(((y_1, \ldots, y_n)R+\p)/\p\right) \geq n-\depth_{R_{\p}}(M_{\p})$$ for all $\frak p\in \Lambda_M.$ 
	  Indeed, set $X_1=\{\p\in \Lambda_M\mid  I \nsubseteq \p\}$. Note that if $\frak p\in\Ass_R(M)$ then $\frak p\in \Lambda_M$  by Lemma \ref{nxtap} and $\depth_{R_{\p}}(M_{\p})=0.$ In this case, $\docao\big((I+\p)/\p\big)\geq n\geq 1$ and hence $I\not\subseteq \frak p.$ Therefore 
$$\Ass_R(M)\subseteq \{ \p\in \Lambda_M\mid \docao\big((I+\p)/\p\big)\geq 1 \}\subseteq X_1.$$
Since $\Lambda_M$ is a finite set, there exists by Prime Avoidance an element $y_1 \in I$ such that $y_1\notin \frak p$ for all $\p \in X_1.$ Hence, $\docao\left((y_1)+\p)/\p\right)=1\geq 1-\depth_{R_{\p}}(M_{\p})$ for all $\p \in X_1$. It is clear that $\docao\big((I+\p)/\p\big)=0$ for all $\frak p\in \Lambda_M\setminus X_1.$ Therefore, if $n=1$ then  $$\docao\left((y_1)+\p)/\p\right)=\docao\big((I+\p)/\p\big)\geq 1-\depth_{R_{\p}}(M_{\p})$$ for all $\frak p\in \Lambda_M\setminus X_1.$ So, the claim holds true for $n=1.$ Let $n\geq 2.$ Set $$X_2=\{\frak p\in \Lambda_M\mid \exists \frak q\in\min((y_1)+\frak p), I\not\subseteq \frak q\}.$$ Then we have
	 $$\Ass_R(M) \subseteq \{ \p\in \Lambda_M\mid \docao\big((I+\p)/\p\big)\geq 2 \} \subseteq X_2.$$
	  Since $\Lambda_M$ is a finite set, there exists by Primes Avoidance an element  $y_2\in I$ such that $y_2\notin \p$ for all $\p\in X_2$. Then $ \docao\left(((y_1, y_{2})+\p)/\p\right)=2\geq 2-\depth_{R_{\p}}(M_{\p})$ for all $\p \in X_2.$ It is clear that $\docao\big((I+\p)/\p\big)=1$ for all $\frak p\in \Lambda_M\setminus X_2.$ Therefore if $n=2$ then  $$\docao\left((y_1, y_2)+\p)/\p\right)=\docao\big((I+\p)/\p\big)\geq 2-\depth_{R_{\p}}(M_{\p})$$ for all $\frak p\in \Lambda_M\setminus X_2.$ So, the claim holds true for $n=2.$ Let $n>2$ and assume that there exist $y_1, \ldots , y_{n-1}\in I$ satisfying the requirements. Note that 
	 $$\Ass_R(M) \subseteq \{ \p\in \Lambda_M\mid \docao\big((I+\p)/\p\big)\geq n \} \subseteq X_n,$$
where $X_n=\{\frak p\in \Lambda_M\mid \exists \frak q\in\min ((y_1, \ldots , y_{n-1})+\frak p), I\not\subseteq \frak q\}$.
By the same arguments as in the above, there exists $y_n\in I$ such that $y_1, \ldots , y_n$ satisfy $$\displaystyle\docao\left(((y_1, \ldots, y_n)R+\p)/\p\right) \geq n-\depth_{R_{\p}}(M_{\p})$$ for all $\frak p\in \Lambda_M.$ Thus, the claim is proved.

 Now we prove by induction on $n$ that $y_1, \ldots , y_n\in I$ is an $M$-sequence whenever $$\displaystyle\docao\left(((y_1, \ldots, y_n)R+\p)/\p\right) \geq n-\depth_{R_{\p}}(M_{\p})$$ for all $\frak p\in \Lambda_M.$ Let $n=1$. Suppose in contrary that $y_1\in\frak p$ for some $\p\in\Ass_R(M).$ Then $\depth_{R_{\p}}(M_{\p})=0$ and we have by our hypothesis that $\displaystyle 0=\docao\left(((y_1)+\p)/\p\right)\geq 1.$ This gives a contradiction.    Let $n>1$ and assume that the result holds true for $n-1$. Set $J=(y_1, \ldots , y_n).$ We have by our hypothesis that
$$n-\depth_{R_{\p}}(M_{\p})\leq \docao\big((J+\p)/\p\big)\leq \docao\left(((y_1, \ldots,y_{n-1})+\p)/\p\right)+1$$
for all $\p\in \Lambda_M$. Hence $ \docao\left(((y_1, \ldots,y_{n-1})+\p)/\p\right)\geq (n-1)-\depth_{R_{\p}}(M_{\p})$ for all $\p\in \Lambda_M$. Therefore, we get by induction that $y_1, \ldots, y_{n-1}$ is an $M$-sequence. We will prove that $y_n$ is $M/(y_1, \ldots, y_{n-1})M$-regular. Assume  that there exists $\q \in \Ass_R(M/(y_1, \ldots, y_{n-1})M)$ such that $y_n\in \q$.  We have $\depth_{R_{\q}}(M_{\q})=n-1$. Set $t=\depth_{R_{\q}}(M_{\q})+\dim (R/\q)$. By Lemma \ref{L:1}(a) we have
$\frak q\in \bigcup_{j=0}^t\Psupp_R^j(M)$.
Let $\q'\in \min\bigcup_{j=0}^t\Psupp_R^j(M)$ such that $\q'\subseteq \q$. Then $\q'\in \Lambda_M$. On the other hand, since $\depth_{R_{\q}}(M_{\q})\leq \depth_{R_{\q'}}(M_{\q'})+\docao (\q/\q')$ (see \cite[Lemma 1.6]{HT}), we have
$$t=\depth_{R_{\q}} (M_{\q}) +\dim (R/\q) \leq \depth_{R_{\q'}} (M_{\q'})+\dim (R/\q') \leq t.$$
Hence $$\depth_{R_{\q'}} (M_{\q'})+\dim (R/\q')=t=\depth_{R_{\q}}(M_{\q})+\dim (R/\q).$$ It follows that
$$\depth_{R_{\q'}} (M_{\q'})=\depth_{R_{\q}}(M_{\q})+\dim (R/\q)-\dim (R/\q')=n-1-\docao\left(\q/\q'\right).$$ So, we get that
$$\docao\left(\q/\q'\right) \geq \docao\left((J+\q')/\q'\right) \geq n-\depth_{R_{\q'}}(M_{\q'})=1+\docao\left(\q/\q'\right).$$
This is a contradiction. So, $y_n$ is $M/(y_1, \ldots, y_{n-1})M$-regular and hence $y_1, \ldots, y_n$ is an $M$-sequence. This proves that  $\depth_R(I,M)=\underset{\p\in \Lambda_M}{\min}\left\{ \depth_{R_{\p}}(M_{\p})+ \docao\big((I+\p)/\p\big) \right\}.$

(b) Assume that $R$ is a quotient of a Cohen-Macaulay local ring. By Lemma \ref{L:1a}(b), $\Psupp^i_R(M)=\Var(\Ann_R(H^i_{\m}(M))$ for all $i\leq d$. Therefore, $\Psupp^i_R(M)$ is closed for all $i\leq d$ and 
	$$\Lambda_M= \bigcup_{i=0}^d \min \Var(\Ann_R(H^i_{\m}(M)).$$
	Now  the result follows by the statement (a).
\end{proof}

The following corollary of Theorem \ref{T:1}(a) gives some cases where the formula of $\depth_R(I,M)$ holds true. Following M. Nagata [Na],  $M$ is said to be {\it unmixed} if $\dim (\R/\frak P)=d$ for all prime  ideals $\frak P\in\Ass_{\widehat R}\widehat M.$

\begin{corollary}\label{C:2} $\Psupp^i_R(M)$ is closed for all $i$ in the following cases:

{\rm (a)} $\dim_R (M)\leq 2$;

{\rm (b)} $\dim_R(M)=3$ and $M$ is unmixed.  

In particular, the equality $\depth_R(I,M)=\underset{\p\in \Lambda_M}{\min}\left\{ \depth_{R_{\p}}(M_{\p})+ \docao\big((I+\p)/\p\big) \right\}$ holds true for any proper ideal $I$ of $R$.
\end{corollary}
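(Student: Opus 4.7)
The plan is to deduce Corollary \ref{C:2} from Theorem \ref{T:1}(a), for which I must show $\Psupp^i_R(M)$ is closed in $\Spec R$ for every $i \leq d$. I would first pass to $R/\Ann_R M$ via the natural correspondence on prime ideals (which preserves closedness) and assume $\Ann_R M = 0$, so that $\dim R = d$. Next I would establish catenariness of $R$: case (a) follows at once because every Noetherian ring of Krull dimension $\leq 2$ is catenary (all saturated chains of primes have bounded length and can be shown to have the same length); case (b) follows from Ratliff's theorem, using that unmixedness of $M$ together with $\Ann_{\widehat R}\widehat M=(\Ann_R M)\widehat R=0$ (by flatness and finite generation) forces every minimal prime of $\widehat R$ to be an associated prime of $\widehat M$, all of dimension $d=3$, making $\widehat R$ equidimensional and $R$ universally catenary. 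With catenariness, Lemma \ref{L:1a}(b) reduces the task to showing that $\min \Psupp^i_R(M)$ is finite for each $i$.

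For case (a), I would handle $i = 0, 1, 2$ by direct inspection of the definition. The set $\Psupp^0_R(M) \subseteq \{\m\}$ is trivial. For $\Psupp^1_R(M)$, any $\p$ with $\dim R/\p = 1$ and $H^0_{\p R_\p}(M_\p) \neq 0$ must lie in $\Ass_R M$, so $\Psupp^1_R(M) \subseteq \{\m\} \cup \Ass_R M$ is finite. For $\Psupp^2_R(M)$ when $d = 2$, I would observe that every minimal prime of $\Supp_R M$ lies in $\Ass_R M \subseteq \Psupp^2_R(M)$; hence a minimal element of $\Psupp^2_R(M)$ cannot strictly contain such a prime and must itself satisfy $\dim R/\p = 2$, putting $\min \Psupp^2_R(M) \subseteq \min \Supp_R M$, a finite set. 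For case (b), unmixedness gives $\Ass_R M \subseteq \{\p : \dim R/\p = 3\}$, so $\Psupp^0_R(M), \Psupp^1_R(M) \subseteq \{\m\}$; and $\Psupp^3_R(M) = \Supp_R M$ is closed by Grothendieck non-vanishing combined with catenariness.

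The main obstacle is $\Psupp^2_R(M)$ in case (b), whose minimal elements are either $\m$ or dim-1 primes $\p$ with $H^1_{\p R_\p}(M_\p) \neq 0$ (equivalently $M_\p$ not Cohen-Macaulay, since $\dim M_\p = 2$ by catenariness and $\Ass_{R_\p}(M_\p)$ consists of dim-2 primes by unmixedness). To show only finitely many such dim-1 primes can occur, I plan to descend via completion: $\widehat R$ is complete, hence a quotient of a regular (and so Cohen-Macaulay) local ring by Cohen's structure theorem, so Lemma \ref{L:1a}(c) gives $\Psupp^2_{\widehat R}(\widehat M) = \Var(\Ann_{\widehat R} H^2_{\widehat{\m}}(\widehat M))$, Zariski-closed in $\Spec \widehat R$ with only finitely many minimal elements. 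I would then use faithful flatness of $R \to \widehat R$ together with unmixedness of $\widehat M$ to show that every minimal prime of $\Psupp^2_R(M)$ is the contraction of a minimal prime of $\Psupp^2_{\widehat R}(\widehat M)$, giving the required finiteness. Once closedness of all $\Psupp^i_R(M)$ is established, Theorem \ref{T:1}(a) delivers the depth formula and the corollary follows.
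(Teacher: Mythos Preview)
Your plan mirrors the paper's proof: reduce to Theorem~\ref{T:1}(a), pass to $R/\Ann_R(M)$, establish catenariness, and in case~(b) control $\Psupp^2_R(M)$ by descending to the completion (where $\widehat R$ is a quotient of a Cohen--Macaulay ring). The paper cites \cite[Corollary~3.4]{NA} for the closedness of $\Psupp^d_R(M)$ and uses a completion argument already for $\Psupp^1_R(M)$, whereas you give more elementary direct arguments in those places; both routes are fine.

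One imprecision to fix: in case~(a) with $d=2$, the claimed inclusion $\Ass_R(M)\subseteq \Psupp^2_R(M)$ is false in general---a minimal prime $\q$ of $\Supp_R(M)$ with $\dim(R/\q)=1$ has $M_\q$ of dimension~$0$, so $H^1_{\q R_\q}(M_\q)=0$ and $\q\notin\Psupp^2_R(M)$. What \emph{is} true (and suffices) is that every minimal prime $\q$ with $\dim(R/\q)=2$ lies in $\Psupp^2_R(M)$, and by catenariness every $\p\in\Psupp^2_R(M)$ contains such a $\q$ (since $\dim M_\p\geq 2-\dim(R/\p)$ forces some minimal $\q\subseteq\p$ with $\dim(R/\q)=2$); hence $\min\Psupp^2_R(M)$ consists of such $\q$'s and is finite. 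With this correction your outline goes through.
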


\begin{proof} By Theorem \ref{T:1}(a), it is enough to prove that $\Psupp^i_R(M)$ is closed for all $i$. It is clear that $\Psupp^0_R(M)\subseteq \{\frak m\}$ so it is closed. If $\Psupp^1_R(M)\subseteq \{\m\}$ then it is closed. Assume that $\Psupp^1_R(M)\nsubseteq \{\m\}$. Let $\frak m\neq \frak p\in \Psupp_R^1(M).$ Then $\dim(R/\frak p)=1$ and $H^0_{\frak pR_{\frak p}}(M_{\frak p})\neq 0.$ Let $\frak P\in\min (\widehat R/\frak p\widehat R).$ Then $\dim (\widehat R/\frak P)=1$ and $$H^0_{\frak P\widehat R_{\frak P}}(\widehat M_{\frak P})\cong H^0_{\frak pR_{\frak p}}(M_{\frak p})\otimes_{R_{\frak p}}\widehat R_{\frak P}\neq 0.$$ Hence $\frak P\in \min \Psupp^1_{\widehat R}(\widehat M).$ Therefore, $\min \Psupp_R^1(M)\subseteq \{\frak P\cap R\mid \frak P\in \min \Psupp^1_{\widehat R}(\widehat M)\}.$ Hence $\Psupp_R^1(M)$ is a finite set. It follows that $\Psupp^1_R(M)$ is closed.
	
 Now we prove (a) and (b). Without loss of generality, we can replace $R$ by $R/\Ann_R(M)$.

(a) Suppose that $\dim_R (M)\leq 2$. Then $\dim (R)\leq 2.$ Hence $R$ is catenary. Therefore, $\Psupp^2_R(M)$ is closed by \cite[Corollary 3.4]{NA}. 

(b)  Suppose that $\dim_R (M)=3$ and $M$ is unmixed. Then $\dim (R)=3.$ Since $M$ is unmixed, $R$ is catenary. Therefore, $\Psupp^3_R(M)$ is closed by \cite[Corollary 3.4]{NA}. Since $M$ is unmixed, $\widehat M$ satisfies the Serre condition $(S_1)$. So, it follows by \cite[2.2.4]{Sch} that $\dim (\widehat R/\Ann_{\widehat R}(H^2_{\frak m\widehat R}(\widehat M))\leq 1.$ So we get by Lemma \ref{L:1a}(b) that $\dim (\widehat R/\frak P)\leq 1$ for all $\frak P\in \Psupp^2_{\widehat R}(\widehat M).$  Since $ M$ is unmixed, $\dim(R/\frak p)=3$ for all $\frak p\in\Ass_R(M)$. Therefore $M$ satisfies the Serre condition $(S_1)$. So we get by \cite[Lemma 4.4]{CNN} that $\dim (R/\frak p)\leq 1$ for all $\frak p\in \Psupp_R^2(M).$ By the same arguments as in the above, we have $$\min \Psupp_R^2(M)\subseteq \{\frak P\cap R\mid \frak P\in \min \Psupp^2_{\widehat R}(\widehat M)\}.$$ Thus, $\Psupp_R^2(M)$ is closed.
\end{proof}

 The following example shows that there exists a Noetherian local ring $(R,\frak m)$ which is not a quotient of Cohen-Macaulay local ring, but $\Psupp^i_R(M)$ is closed for all $i$, and hence the formula of $\depth_R(I,M)$ holds true by Theorem \ref{T:1}(a).

\begin{example} {\rm Let $(R,\frak m)$ be the Noetherian local domain of dimension $2$ constructed by D. Ferrand and M. Raynaud \cite{FR} such that $\widehat R$ has an embedded primes of dimension $1$. Then $R$ is not a quotient of a Cohen-Macaulay local ring. For any finitely generated $R$-module $M$, all the pseudo supports of $M$ are closed and the formula of $\depth_R(I,M)$ holds true by Corollary \ref{C:2}(a)}.
\end{example} 

In the next example, we will show that there exists a Noetherian local domain $R$ which has non-closed pseudo supports, but the formula of $\depth_R(I,R)$ in Theorem \ref{T:1}(a) still holds true.
	 
\begin{example} \label{E:1} {\rm Let $(R,\m)$ be the $3$-dimensional Noetherian local domain such that
		
 (a) $R$ is not canetary, but all formal fibers of $R$ are Cohen-Macaulay;
		
 (b) $\Psupp_R^0(R)=\emptyset$, $\Psupp_R^1(R)=\{\m\}$;
		
 (c) $\Psupp_R^2(R)= \{\m\}\cup \{\frak p\in \Spec(R)\mid \docao(\p)+\dim(R/\p)=2\}$ which is not closed;

 (d) $\Psupp_R^3(R)=\{\frak p\in \Spec(R)\mid \docao(\p)+\dim(R/\p)=3\}$ which is not closed.
 
Such a domain exists by M. Brodmann and R. Y. Sharp \cite[Example 3.2]{BS1}.
Let $I$ be a proper ideal of $R$. Set $\Lambda_R=\bigcup_{i=0}^3 \min\Psupp^i_R(R)$ and set $\Lambda_R':=\{\m, 0\}$. Then $\Lambda_R' \subseteq \Lambda_R$ and
$$\depth_R(I,R)=\underset{\p\in \Lambda_R}{\min}\left\{\depth(R_{\p})+ \docao\big((I+\p)/\p\big) \right\}=\underset{\Lambda_R'}{\min}\left\{\depth(R_{\p})+ \docao\big((I+\p)/\p\big) \right\}.$$
}
\end{example}
\begin{proof} Set $U=\left\{\p\in \Spec(R)\mid \docao(\p)+\dim(R/\p)=2\right\}.$ It is clear that $0\in \Psupp_R^3(R).$ Therefore 
$\Lambda_R =\{\m, 0\} \cup \min U.$ 
 Note that
$$\depth_R(I,R)\leq \underset{\p\in \Lambda_R}{\min}\left\{ \depth(R_{\p})+ \docao\big((I+\p)/\p\big) \right\}.$$
We have $\depth_R(I, R)\leq \depth(R)=1.$ If $\depth_R(I, R)=0$ then $I=0$ since $R$ is a domain. In this case, we choose $\p=0$, then $\frak p\in\Lambda_R'$ and we have $\depth(R_{\p})+ \docao\big((I+\p)/\p\big)=0$. So, we assume that $\depth_R(I, R)=1$. In this case we choose  $\p=\m$, then $\frak p\in\Lambda_R'$ and we have $$\depth(R_{\p})+ \docao\big((I+\p)/\p\big)=1.$$ 
Therefore the result follows. In this example, $\Lambda_R$ is an infinite set and $\Lambda_R'$ is a finite set.
\end{proof}

Note that  $R$ is a quotient of a Cohen-Macaulay local ring if and only if $R$ is universally catenary and all its formal fibers are Cohen-Macaulay, cf. \cite[Corollary 1.2]{Kawasaki}.  The next example shows that there exists a universally catenary local domain $(R,\frak m)$ with a non-Cohen-Macaulay fiber and the formula of $\depth_R(I,M)$ in Theorem \ref{T:1}(b) does not hold true. Before doing that, we recall the secondary representation of Artinian modules introduced by I. G. Macdonald \cite{Mac}. Let $A$ be an Artinian $R$-module. Then $A$ has a minimal secondary representation $A=A_1+\ldots +A_n$, where each $A_i$ is $\frak p_i$-secondary, $A_i$ is not redundant and $\frak p_i\neq \frak p_j$ for all $i\neq j.$ The set $\{\frak p_1, \ldots , \frak p_n\}$ is independent of the choice of minimal secondary representation of $A$. This set is called the set of {\it attached primes} of $A$ and denoted by $\Att_R(A).$ Note that $\min\Att_R(A)=\min\Var(\Ann_R(A)),$ in particular, $A\neq 0$ if and only if $\Att_R(A)\neq \emptyset$. Moreover, by \cite[8.2.8 and 8.2.5]{BS}, $A$ has a natural structure of an Artinian $\widehat R$-module and with this structure we have
$$\Att_R(A)=\{\frak P\cap R\mid \frak P\in\Att_{\widehat R}(A)\}.$$  
 \begin{example}\label{E:2}{\rm Let $(R, \m)$ be the $3$-dimensional Noetherian local domain such that 
 		
 		(a) $\widehat R \cong \mathbb Q[[V_1, V_2, X, Y]]/(V_1 V_2)\cap (V_1^2, V_2^2)$, where $V_1, V_2, X, Y$ are independent indeterminates over $\Bbb Q$;
 		
 		(b) $\Psupp^0_R(R)=\Psupp^1_R(R)=\emptyset$,  $\Psupp^3_R(R)=\Spec(R)$;
 		
 		(c) $\Psupp^2_R(R)=\{\p\in \Spec(R)\mid \depth (R_{\p})=\dim(R_{\p})-1\}=\nCM(R)$ which is not closed,  where $\nCM(R)$ is the non-Cohen-Macaulay locus of $R$.
 		
 		Such a domain exists,  see \cite[Example 3.1]{BS}. Then $R$ is universally catenary, $R$ has a non-Cohen-Macaulay formal fiber, $\nCM(R)$ is a non-closed infinite set, and the formula of $\depth_R(I,R)$ in Theorem \ref{T:1}(b) does not hold true for all ideals $I$ with $\Rad(I)\in\nCM(R).$}
 \end{example}
 
 \begin{proof} $R$ is universally catenary as $\widehat R$ is equidimensional. Set  $\frak P=(V_1, V_2)\widehat R$. Since $\frak P$ is an embedded prime of $\widehat R$, it follows that $R$ has a non-Cohen-Macaulay formal fiber. Let $\Lambda_R=\bigcup_{i=0}^3 \min \Var(\Ann_R(H^i_{\m}(R)))$ be defined as in Theorem \ref{T:1}(b). 
 	
 	Since $R$ is catenary, we have  $$\Var(\Ann_R(H^3_{\m}(R))=\Psupp^3_R(R)=\Spec(R)$$ by \cite[Corollary 3.4]{NA}. Hence $\min \Var(\Ann_R(H^3_{\m}(R))=\{0\}$. Since $\mathfrak{P}\in \Ass(\widehat R)$ and $\dim(\widehat R/\mathfrak{P})=2,$ it follows by \cite[11.3.9]{BS} that $\mathfrak{P}\in \Att_{\widehat R} (H^2_{\m}(R))$. Since $R$ is a domain and $\mathfrak{P}\cap R \in \Ass(R)$ by \cite[Lemma 3.4]{Nh}, we have $0=\mathfrak{P}\cap R$. Hence $0\in \Att_{ R} (H^2_{\m}(R))$. So, $\min \Var(\Ann_R(H^2_{\m}(R))=\{0\}$. Therefore, $\Lambda_R=\{0\}$. Now we show that the formula of $\depth (I, R)$ does not hold true for all ideal $I$ such that $\Rad(I)\in\nCM(R).$  We divide into two cases. 
 	
 	$\bullet$ Suppose that $\Rad(I)=\m$. Then $$\depth_R(I,R)=2<\underset{\p\in \Lambda_R}{\min}\left\{ \depth(R_{\p})+ \docao\big((I+\p)/\p\big) \right\} =3.$$
 	
 	$\bullet$ Suppose that $\Rad(I)=\frak q\in\nCM(R)\setminus\{\frak m\}.$ Then $\dim(R_{\q})=2$ and $\depth (R_{\q})=1.$ Hence $\docao (\frak q)=2$ and $\depth_R(\frak q, R)=1.$ Therefore $$\depth_R(I,R)=1<\underset{\p\in \Lambda_R}{\min}\left\{ \depth(R_{\p})+ \docao\big((I+\p)/\p\big) \right\} =2.$$
 	This means the formula of $\depth_R(I,R)$ in Theorem \ref{T:1}(b) does not hold true for all ideals $I$ with $\Rad(I)\in\nCM(R).$ 	
 \end{proof}

 Finally, we give an example to show that there exists a Noetherian  local domain $(R,\frak m)$ such that all formal fibers of $R$ are Cohen-Macaulay,  $R$ is not universally catenary and the formula of $\depth_R(I,M)$ in Theorem \ref{T:1}(b) does not hold true.
 
 \begin{example}\label{E:3}{\rm
 		Let $\mathbb Q[[x,y,z,w,t]]$ be the ring of formal power series in $4$ variables over $\mathbb Q$. Then there exists by \cite{CL} a $3$-dimensional Noetherian local domain $(R,\m)$ such that
 		
 		(a) $\widehat R\cong \mathbb Q[[x,y,z,t]]/(x)\cap (y,z)$;
 		
 		(b) $\tau^{-1}(0)=\{x\widehat R, (y,z)\widehat R\}$; 
 		
 		(c) $\tau^{-1}(\frak P\cap R)=\{ \frak P\}$ for all $\frak P \in \Spec(\widehat R)\setminus \tau^{-1}(0)$,
 		where $\tau: \Spec(\widehat R) \rightarrow \Spec(R)$ is the map induced by the natural homomorphism $R\rightarrow \widehat R$.
 		
 		Then all formal fibers of $R$ are Cohen-Macaulay, $R$ is not universally catenary, $\nCM(R)$ is a closed subset of $\Spec(R)$ with $\dim \nCM(R)=1$ and the formula of $\depth_R(I,R)$ in Theorem \ref{T:1}(b) does not hold true for all ideals $I$ of $R$ such that $\Rad(I)\in\nCM(R).$}
 \end{example}
 
 \begin{proof} Since $\widehat R$ is not equidimensional, $R$ is not universally catenary. It follows by (b) and (c) that all formal fibers of $R$ are Cohen-Macaulay. We have $H_{\m}^0(R)=0$. Since $R$ is a domain, we get by \cite[7.3.2]{BS} that $0\in \Att_R(H_{\m}^3(R)).$ Hence $\min \Var(\Ann_R(H_{\m}^3(R)))=\{0\}.$ Set $\frak P=(y,z)\widehat R$. Then $\frak P\in \Ass(\widehat R)$ and $\dim (\widehat R/\frak P)=2$. So we get by \cite[11.3.9]{BS} that $\frak P\in \Att_{\widehat R}(H_{\m}^2(R)).$ Since $R$ is a domain and $\frak P\cap R\in \Ass(R)$ by \cite[Lemma 3.4]{Nh}, we have $0\in \Att_{ R}(H_{\m}^2(R)).$ Therefore, $\min \Var(\Ann_R(H_{\m}^2(R)))=\{0\}.$ From the exact sequence
 	$$0\rightarrow \widehat R\rightarrow \widehat R/x\widehat R \oplus \widehat R/(y,z)\widehat R\rightarrow  \widehat R/(x,y,z)\widehat R\rightarrow 0$$
 	we have
 	$H_{\m\widehat R}^1(\widehat R) =0$. It follows that $H_{\m}^1(R)=0.$ Therefore, 
 	$$\Lambda_R=\bigcup_{i=0}^3 \min \Var(\Ann_R(H^i_{\m}(R)))=\{0\}.$$
 	
 	Since all formal fibers of $R$ are Cohen-Macaulay, $\nCM(R)$ is closed. It is clear that $\frak m\in\nCM(R).$ Set $\frak Q=(x,y,z)\widehat R$. Then $\widehat R_{\frak Q}$ is not unmixed. Therefore $\widehat R_{\frak Q}$ is not Cohen-Macaulay. We have $\dim (\widehat R/\frak Q)=1.$ Set $\frak q=\frak Q\cap R.$ By (c), $\frak Q$ is the unique prime ideal of $\widehat R$ such that $\frak Q\cap R=\frak q.$ Hence $\dim(R/\frak q)=1$ and $R_{\frak q}$ is not Cohen-Macaulay.  So, $\dim\nCM(R)\geq 1.$ Let  $\frak q\in\nCM(R)\setminus\{\frak m\}.$ As $R$ is a domain, $\dim(R_{\frak q})=2$ and $\depth (R_{\frak q})=1.$ Hence $\dim(R/\frak p)=1.$ Therefore $\dim\nCM(R)\leq 1.$  
 	
 	Now, by the same arguments as in Example \ref{E:2}, we can show that $$\depth_R(I,R)+1=\underset{\p\in \Lambda_R}{\min}\left\{ \depth(R_{\p})+ \docao\big((I+\p)/\p\big) \right\}$$ for all ideals $I$ of $R$ such that $\Rad(I)\in\nCM(R).$  
 \end{proof}

\end{document}